\newtheorem{theorem}{Theorem}
\newtheorem{proposition}[theorem]{Proposition}
\newtheorem{lemma}[theorem]{Lemma}
\theoremstyle{definition}
\newtheorem{definition}[theorem]{Definition}
\theoremstyle{theorem}
\newtheorem*{AppTheo}{Main result}
\newtheorem*{remark*}{Remark}
\theoremstyle{definition}
\newtheorem*{definition*}{Definition}
\title{Invariant cone and  synchronization state stability of  the mean field models}
\author{W. Oukil$^{1}$, Ph. Thieullen$^{2}$ and A.  Kessi$^{3}$\\
\small\text{1. Department of Mathematics and Computer Science, M\'ed\'ea University,}\\
\small\text{P\^ole Urbain -26000 M\'ed\'ea, Algeria.}\\
\small\text{2. Institute of Mathematics of Bordeaux, Bordeaux University,}\\
\small{351, Cours de la Lib\'eration - F 33 405 Talence, France.}\\
\small\text{3. Faculty of Mathematics,  University of  Houari Boumediene,}\\
\small\text{ BP 32 El Alia 16111, Bab Ezzouar, Algiers, Algeria.}}
\begin{document}
\sloppy
\binoppenalty 10000
\relpenalty 10000
\date{\today}
\maketitle
\begin{abstract}
In this article we prove the stability of some mean field systems similar to the  Winfree  model in the synchronized state. The model   is governed by the coupling strength parameter $\kappa$ and the  natural frequency of each oscillator. The stability is proved independently of the number of oscillators and the distribution of the natural frequencies. The main result is proved using the positive invariant cone method for the linearized system. This method can be applied to other mean field models as in the Kuramoto model.
\end{abstract}
\begin{keywords} Stability, coupled oscillators,  mean field, interconnected systems, synchronization, Winfree model.
\end{keywords}
\section{Introduction and Main result}\label{Section:Introduction}
 In 1967 Winfree \cite{WinfreeModel}   proposed a mean field model  describing the synchronization of a population of organisms or {\it oscillators} that interact simultaneously \cite{AriaratnamStrogatz,Quinn}.  The mean field models are used, for example, in the Neurosciences to study of neuronal synchronization in the brain  \cite{Cumin2007,Ermentrouttheeffects}. The Winfree model is given by the following differential equation
\begin{equation}\label{equation:WinfreeModel}
\begin{cases}
\dot{x}_i=\omega_i-\kappa\sigma(x)R(x_{i}),\ t\geq0,\ x=(x_1,\ldots,x_n), \\
\sigma(x):=\frac{1}{n}\sum_{j=1}^{n}P(x_j),\ \forall x=(x_1,\ldots,x_n) \in\mathbb{R}^n, \\
\sup_{x \in\mathbb{R}} P(x)R(x)>0, \quad P,R \in C^2(\mathbb{R}) \ \text{are $2\pi$-periodic},
\end{cases}
\end{equation}
where $n\geq 1$  is the number of oscillators, $\sigma(x)$ is the mean field interaction, $x_i(t)$ is the phase of the i-th oscillator, and  $x(t)=(x_1(t), \ldots,x_n(t))$ is the global state of the system. We assume that the {\it natural frequencies} are chosen  indifferently in some interval about $\omega=1$,
\begin{equation}
\label{equation:ChoiceNaturalFrequencies}
\omega_i  \in  (1-\gamma,1+\gamma), \quad\text{where}\quad  \gamma \in (0,1).
\end{equation}
The {\it coupling strength} $\kappa$ is taken in the interval $(0,1)$. Let 
\begin{equation} \label{equation:aprioriBounds}
 M :=  16\max \{\|P^{(i)} \|_\infty \|R^{(j)}\|_\infty :   0\le i,j\le 2\},
\end{equation}
be a constant used explicitly in some estimates measuring the size of the mean field.

We first define the notions of  invariance and  stability. Let  $n\in\mathbb{N}^*$ and $F: \mathbb{R}^{n}\to\mathbb{R}^{n}$  be a $C^1$ vector field. Denote $DF$ its Jacobian and assume
\[
\max\{\sup\limits_{z \in \mathbb{R}^n}\norm{F(z)},\ \sup\limits_{z \in \mathbb{R}^n}\norm{DF(z)}\}<\infty.
\]
where $\norm{.}$ is the usual matrix norm. Let $\phi^{t} : \mathbb{R}^n \to \mathbb{R}^n$ be the flow of the  autonomous system
\begin{equation}\label{systemannexe}
\dot{x}=F(x),\ t\geq 0.
\end{equation}
\begin{definition}[Invariance]\label{InvariantOpenSetDefinition}
Let   $C \subset \mathbb{R}^n$ be an open set. We say that $C$ is {\it $\phi^t$-positively invariant}  for the system \eqref{systemannexe}, if $\phi^{t}(C) \subset C$ for all $t \geq 0$.
\end{definition}
\begin{definition}[Stability]\label{AnnexeCAStabAnnexe}
Let  $C \subset \mathbb{R}^n$ be an open set. We say that the system\eqref{systemannexe} is  {\it $\phi^t$-positively stable on $C$}, if  $C$ is $\phi^t$-positively invariant and
\begin{multline*}
\exists \lambda > 0,\ \forall x \in C,  \ \exists \delta>0, \ \forall y \in C:\  \\
 \norm{x-y}<\delta \ \implies \  \|\phi^{t}(x)-\phi^{t}(y) \|\le \lambda\norm{x-y}, \ \ \forall t \geq 0.
\end{multline*}
\end{definition}
\noindent  Let $\Phi^t$ be the flow of the Winfree model \eqref{equation:WinfreeModel}. The existence of a synchronization state in the Winfree model   is proved in \cite{OukilKessiThieullen}   for every number $n$ of oscillators and every choice of natural frequencies. We recall the main  synchronization hypothesis used in \cite{OukilKessiThieullen},
\[
\int_0^{2\pi}\! \frac{P(s)R'(s)}{1-\kappa P(s)R(s)} \,ds > 0,\quad \forall \kappa \in(0,\kappa_*), \tag{H}
\] 
where $\kappa_*$ is the  {\it locking bifurcation critical} parameter $\kappa_*$  defined by 
\begin{equation} \label{equation:kappaStar}
\kappa_* := \Big(\sup_{x \in \mathbb{R}} P(x)R(x) \Big)^{-1}.
\end{equation}
We proved in \cite{OukilKessiThieullen}  there exists an open set
\[
U \subset \Big\{ (\gamma,\kappa) \in (0,1) \times (0,\kappa_*) : 1-\gamma-\frac{\kappa}{\kappa_*} >0 \Big\}
\] 
containing in its closure $\{0\}\times [0,\kappa_*]$,  such that for every $n\geq1$ and every parameter $(\gamma,\kappa) \in U$  there exist two constants $D\in(0,1)$  and $\alpha(\gamma,\kappa,D)$,
\begin{gather} \label{equation:alphaofDelta}
\begin{cases}
\displaystyle 1-\gamma- M \kappa D -\kappa/\kappa_*>0, \\
\displaystyle \alpha(\gamma,\kappa,D) := \frac{2\gamma + M \kappa D^2}{1-\kappa/\kappa_*} + \frac{(2\gamma+ M\kappa D^2+ M \kappa D)(\gamma+ M \kappa D)}{(1-\gamma- M \kappa D -\kappa/\kappa_*)(1-\kappa/\kappa_*)},
\end{cases}
\end{gather}
and a $C^2$  $2\pi$-periodic  function $\Delta_{\gamma,\kappa} : \mathbb{R} \to (0,D)$  solution of
\begin{equation}\label{equation:Delta}
\begin{split}
 \frac{d}{ds}\Delta_{\gamma,\kappa}(s) = -\frac{\kappa P(s)R'(s)}{1-\kappa P(s)R(s)}\Delta_{\gamma,\kappa}(s) + \alpha(\gamma,\kappa,D), 
\end{split}
\end{equation}
and  a  $\Phi^t$-positively invariant open set $C_{\gamma,\kappa}^n$  independent of choice of the natural frequencies $(\omega_i)_{i=1}^n$, 
\begin{equation} \label{invariantwinfreeset}
C_{\gamma,\kappa}^n := \Big\{ x=(x_i)_{i=1}^n \in \mathbb{R}^n \,:\, \max\limits_{i,j}|x_j-x_i| < \Delta_{\gamma,\kappa}\Big( \frac{1}{n}\sum_{i=1}^n x_i \Big) \Big\}.
\end{equation}

The following main result asserts that $C_{\gamma,\kappa}^n $ is positively stable.

\begin{AppTheo}\label{mainresult}
Consider the Winfree model \eqref{equation:WinfreeModel} and assume that  hypothesis \text{\rm(H)} is satisfied. Then for every parameter $(\gamma,\kappa) \in U$, for every $n\geq1$ and  every choice of natural frequencies $(\omega_i)_{i=1}^n$ as in \eqref{equation:ChoiceNaturalFrequencies},  the Winfree model \eqref{equation:WinfreeModel} is $\Phi^t$-positively stable on $C_{\gamma,\kappa}^n$.
\end{AppTheo}

Using a more refined version of Theorem 2 in Saito, see \cite{Fink1974_01, Saito1971_01}, one can prove  the existence of a uniform {\it rotation vector} $\rho \in\mathbb{R}^n$ such that for every initial condition $x \in C_{\gamma,\kappa}^n$
\[
\Phi^t(x)=\rho t + p_x(t), \quad \forall t\geq0
\]
where  $p_x(t)$ is an {\it{almost periodic function}}.

\section{Invariant cone and stability}

We study in this section  the stability of a system of the form \eqref{systemannexe} using the  positive invariant cone method for the linearized equation. Propositions \ref{AnnexedernFinstab} and \ref{LemmeStabilityMeanField}  are the two main ingredients that guarantee the stability of the Winfree model. 
We actually consider more generally a parametrized linear system of the form,
\begin{equation}\label{LinearRef}
\dot{y}=A(x,t)y, \quad t\geq0, \ x \in C,
\end{equation}
where  $C$ is an open set and  $A(x,t)$  is a  continuous $n\times n$ matrix function on $C \times \mathbb{R}^+$.  Let $\Psi_x^{t}$  be the fundamental matrix of  \eqref{LinearRef} parametrized by $x\in C$.  The  fundamental matrix cocycle of the system \eqref{LinearRef} is denoted by
\[
\Psi_x^{t,t'}(z):=\Psi_x^{t}\big(\Psi_x^{t'}\big)^{-1}(z),\ \forall z\in\mathbb{R}^n,\ \forall t\geq t'\geq0.
\]
Let $V_+$ be the positive cone defined by
\begin{equation} \label{equation:positiveCone}
V_+ := \{(z_1,\ldots,z_n)\in \mathbb{R}^n :\ z_i\geq0,\ \forall i=1,\ldots, n\}.
\end{equation}
\begin{definition}\label{positifregularity}
Consider the linear system \eqref{LinearRef}. We say that the cone $V_+$ is {\it $\Psi^{t}_x$-positively invariant uniformly in $x \in C$} if  
\[
\exists \delta >0,\ \forall x\in C,\ \exists t_x\in[0,\delta]:\quad  \Psi^{t,t_x}_ {x}(V_+)\subset V_+,\quad \forall t\geq t_x.
\]
\end{definition}

\begin{definition}
Consider the linear system \eqref{LinearRef}. Let $\Psi_x^{t}$  be its  fundamental matrix. We say that \eqref{LinearRef}  is  {\it $\Psi_x^t$-positively stable uniformly in $C$} if 
\begin{align*}
&\exists \lambda > 0,\ \forall x \in C, \ \forall t \geq 0, \quad   \norm{\Psi_x^{t}}\le \lambda.
\end{align*}
\end{definition}

We study in the next proposition  the stability of some classes of nonlinear systems using the positive invariant cone method.

\begin{proposition}\label{AnnexedernFinstab}
Consider the system \eqref{systemannexe}. Let be $F:=(f_1,\ldots,f_n)$. Suppose that there exists a  $\phi^t$-positively invariant open set $C\subset\mathbb{R}^n$ and there exists $\alpha>0$ such that
\[
 f_{i}(\phi^t(x))\geq \alpha,\ \ \forall x \in C, \ \forall t\geq 0, \ \forall i \in \{1,\ldots,n\}.
\]
Let  $x\in C$ and  $\Psi_x^{t}$  be the  fundamental matrix of the linearized system
\begin{equation}\label{Annexepose}
\dot{y} = DF(\phi^{t}(x))y,\quad t\geq0.
\end{equation}
Suppose that $V_+$ as in  \eqref{equation:positiveCone} is $\Psi_ {x}^t$-positively invariant uniformly in $C$, then  \eqref{systemannexe} is  $\phi^t$-positively stable on  $ C$.
\end{proposition}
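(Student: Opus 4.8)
The plan is to reduce the nonlinear $C$-positive stability to a uniform operator bound on the linearized flow, and to extract that bound from the invariant cone $V_+$ together with the distinguished solution $t\mapsto F(\phi^t(x))$. I would first record two facts. Since $F$ is $C^1$ and bounded, $\phi^t$ is globally defined and $C^1$ in the initial condition, and its spatial derivative solves the variational equation; thus $\Psi^t_x(z)=D\phi^t(x)z$, so a uniform bound $\norm{\Psi^t_x}\le\lambda$ is the same as $\norm{D\phi^t(x)}\le\lambda$. Next, differentiating $\dot\phi^t(x)=F(\phi^t(x))$ in $t$ shows that $v(t):=F(\phi^t(x))$ is itself a solution of the linearized system \eqref{Annexepose}. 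By the hypothesis $f_i(\phi^t(x))\ge\alpha$ and the global bound $r$, its components satisfy $\alpha\le v_i(t)\le r$ for all $i$ and all $t\ge0$, so $v(t)$ stays in the interior of $V_+$, uniformly bounded away from the boundary and uniformly bounded in norm.

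The core step is linear $C$-positive stability, i.e. a bound $\norm{\Psi^t_x(z)}\le\lambda\norm{z}$ uniform in $x\in C$ and $t\ge0$. I would split at the control time $t_x\in[0,\delta]$. On $[0,t_x]$ the estimate $\norm{dF}\le r$ and Gronwall give $\norm{\Psi^t_x(z)}\le e^{r\delta}\norm{z}$. On $[t_x,\infty)$ I use the cone: the invariance $\Psi^{t-t_x}_x(V_+)\subset V_+$ means the evolution from $t_x$ is a linear map preserving $V_+$, hence order-preserving for the partial order $a\preceq b\iff b-a\in V_+$. Writing $z':=\Psi^{t_x}_x(z)$ for the value at the control time and using $v_i(t_x)\ge\alpha$, I sandwich $-\frac{\norm{z'}_\infty}{\alpha}v(t_x)\preceq z'\preceq\frac{\norm{z'}_\infty}{\alpha}v(t_x)$. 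Applying the order-preserving evolution and using $\Psi^{t-t_x}_x(v(t_x))=v(t)$ together with $\Psi^{t-t_x}_x(z')=\Psi^t_x(z)$ gives, componentwise, $|(\Psi^t_x(z))_i|\le\frac{\norm{z'}_\infty}{\alpha}v_i(t)\le\frac{r}{\alpha}\norm{z'}_\infty$.

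Combining this with the transient estimate $\norm{z'}\le e^{r\delta}\norm{z}$ and the equivalence of norms on $\mathbb{R}^n$ yields $\norm{\Psi^t_x(z)}\le\lambda\norm{z}$ for all $t\ge0$, with $\lambda$ depending only on $r$, $\alpha$, $\delta$ and $n$; crucially $\lambda$ is independent of $x$ because $\alpha$ and $\delta$ are uniform. This is exactly linear $C$-positive stability, equivalently $\norm{D\phi^t(x)}\le\lambda$ for every $x\in C$ and $t\ge0$.

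Finally I would pass to the nonlinear statement. Given $x\in C$, choose $\delta'>0$ with the ball $B(x,\delta')\subset C$ (possible since $C$ is open); for $y\in C$ with $\norm{x-y}<\delta'$ the whole segment lies in $C$, and the fundamental theorem of calculus gives $\phi^t(y)-\phi^t(x)=\int_0^1 D\phi^t(x+s(y-x))(y-x)\,ds$. Bounding each $\norm{D\phi^t(x+s(y-x))}\le\lambda$ yields $\norm{\phi^t(y)-\phi^t(x)}\le\lambda\norm{x-y}$ for all $t\ge0$, which, together with the assumed $\phi^t$-invariance of $C$, is precisely $C$-positive stability. I expect the main obstacle to be the cone-comparison step: one must correctly treat the transient before the control time and verify that the order-preservation and sandwich argument produces a constant that is uniform in $x$, which rests on the uniform lower bound $\alpha$ on the components of $v(t)$ and the uniform bound $\delta$ on the control time.
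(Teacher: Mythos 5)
Your proposal is correct and follows essentially the same route as the paper: the sandwich $\pm z'\preceq\frac{\norm{z'}_\infty}{\alpha}v(t_x)$ is exactly the paper's construction $z^\pm:=\eta\norm{y}F(\phi^{t_x}(x))\pm y\in V_+$ with $\eta=1/\alpha$, propagated by cone invariance using the distinguished solution $F(\phi^t(x))$, combined with a Gronwall bound on $[0,t_x]$ and the fundamental-theorem-of-calculus segment argument (the paper's Lemma \ref{AnnexeCBstabBsystem}) to pass to the nonlinear statement. The only difference is cosmetic: you phrase cone invariance as order-preservation and make the flow composition $\Psi^t_x=\Psi^{t-t_x}_x\circ\Psi^{t_x}_x$ explicit, which the paper leaves implicit.
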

To prove  Proposition  \ref{AnnexedernFinstab} we use the next Lemma which gives a sufficient condition of the stability of the system \eqref{systemannexe} as defined in Definition  \ref{AnnexeCAStabAnnexe}.
\begin{lemma}\label{AnnexeCBstabBsystem}
Consider the system \eqref{systemannexe}. Suppose that there exists a  $\phi^t$-positively invariant open set $C\subset\mathbb{R}^n$   such that the linear system
\begin{equation}\label{LinearLemmaAnnexeC}
\dot{y}= DF(\phi^{t}(x)) y,\quad \forall t\geq 0, \ \forall x \in C,
\end{equation}
is $\Psi_x^t$-positively stable uniformly in $C$. Then  \eqref{systemannexe} is $\phi^t$-positively stable  on $C$.
\end{lemma}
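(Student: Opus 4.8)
The plan is to identify the flow $\Psi_x^t$ of the linearized system \eqref{LinearLemmaAnnexeC} with the spatial Jacobian of the nonlinear flow $\phi^t$, and then to pass from a uniform operator bound on that Jacobian to the Lipschitz estimate required by Definition \ref{AnnexeCAStabAnnexe}. Since $F$ is $C^1$ with $\sup_z \norm{F(z)}$ and $\sup_z \norm{dF(z)}$ both bounded by $r<\infty$, the flow $\phi^t$ is complete and depends in a $C^1$ manner on the initial condition. Writing $M(t,x) := D_x\phi^t(x)$ for the Jacobian of $x \mapsto \phi^t(x)$, classical smooth-dependence theory shows that $M(\cdot,x)$ solves the variational equation $\dot M = dF(\phi^t(x))\,M$, $M(0,x)=I$. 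This is exactly the linear system \eqref{LinearLemmaAnnexeC}, so that $\Psi_x^t(z) = M(t,x)\,z = D_x\phi^t(x)\,z$ for every $z \in \mathbb{R}^n$.

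With this identification, the hypothesis that \eqref{LinearLemmaAnnexeC} is linearly $C$-positive stable furnishes a single constant $\lambda>0$ such that $\norm{D_x\phi^t(\xi)\,z} \le \lambda\norm{z}$ for all $\xi \in C$, all $z \in \mathbb{R}^n$ and all $t \ge 0$; in operator norm this reads $\norm{D_x\phi^t(\xi)} \le \lambda$ uniformly in $\xi \in C$ and $t \ge 0$. This uniform bound over the whole invariant set $C$ is the crucial consequence of the linear hypothesis, and it is precisely what allows $\lambda$ to be chosen \emph{before} $x$, matching the quantifier order in Definition \ref{AnnexeCAStabAnnexe}.

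It remains to integrate this infinitesimal estimate. Fix $x \in C$. Since $C$ is open, choose $\delta>0$ with the ball $B(x,\delta) \subset C$. For any $y \in C$ with $\norm{x-y}<\delta$, the segment $s \mapsto \gamma(s) := x + s(y-x)$, $s \in [0,1]$, remains in $B(x,\delta) \subset C$ by convexity of the ball. Applying the fundamental theorem of calculus to $s \mapsto \phi^t(\gamma(s))$ gives
\[
\phi^t(y) - \phi^t(x) = \int_0^1 D_x\phi^t(\gamma(s))\,(y-x)\,ds,
\]
and taking norms together with the uniform bound $\norm{D_x\phi^t(\gamma(s))} \le \lambda$ (valid since $\gamma(s) \in C$) yields $\norm{\phi^t(y)-\phi^t(x)} \le \lambda\norm{y-x}$ for all $t \ge 0$. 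As $C$ is $\phi^t$-positive invariant, this is exactly the statement that \eqref{systemannexe} is $C$-positive stable.

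The main obstacle is the first step: rigorously identifying $\Psi_x^t$ with the spatial derivative $D_x\phi^t(x)$ and justifying the variational equation. Everything downstream is the routine segment-integration argument, but it rests on knowing that the flow is differentiable in its initial condition and that this derivative solves \eqref{LinearLemmaAnnexeC}. The global $C^1$ bound $r<\infty$ on $F$ and $dF$ is what guarantees both completeness of $\phi^t$ and the required smooth dependence, so the identification is licit and the constant $\lambda$ transfers unchanged from the linear to the nonlinear estimate.
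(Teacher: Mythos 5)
Your proposal is correct and follows essentially the same route as the paper: identify the flow of the linearized system with the Jacobian $d\phi^t(x)$ via the variational equation, use the linear $C$-positive stability to get the uniform bound $\norm{d\phi^t(\xi)}\le\lambda$ over $\xi\in C$ and $t\ge0$, and integrate along a segment joining two nearby initial conditions to obtain the Lipschitz estimate. If anything, you are slightly more careful than the paper in using the openness of $C$ to choose the ball $B(x,\delta)\subset C$ guaranteeing the segment stays inside $C$.
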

\begin{proof}
The system \eqref{systemannexe} can be written as
\[
\frac{d}{dt}{D\phi^{t}(x)}=DF(\phi^{t}(x)) {D \phi^{t}(x)} \quad\text{with}\quad \Psi_x^t = D\phi^t(x).
\]
Since the  system  \eqref{LinearLemmaAnnexeC}  is $\Psi_x^t$-positively stable uniformly in $C$, we have
\begin{equation}\label{Annexejacob}
\exists \lambda>0,\ \forall x \in C,  \   \forall t \geq 0, \quad \norm{D \phi^{t}(x)} \le \lambda.
\end{equation}
Let $(z_1, z_2) \in C \times C$  such that $z(s):=(1-s)z_2+s z_1 \in C$ for all $s \in [0,1]$. Then
\begin{multline*}
\norm{\phi^{t}(z_1)-\phi^{t}(z_2)} =\norm{\int_{0}^{1}\!\frac{d}{ds}\phi^{t}(z(s))ds}=\norm{\int_{0}^{1}\! D \phi^{t}(z(s))\frac{dz(s)}{ds} \,ds}, \\
=\norm{\int_{0}^{1}\!D \phi^{t}(z(s))(z_1-z_2) \,ds}  \le\!\sup\limits_{s \in [0,1]}\norm{D \phi^{t}(z(s)) (z_1-z_2)}.
\end{multline*}
Finally, use the fact $z(s) \in C$  and use equation \eqref{Annexejacob} to obtain 
\[
\norm{\phi^{t}(z_1)-\phi^{t}(z_2)} \le \lambda\norm{z_1-z_2},   \ \ \forall t \geq 0,
\]
which implies that the  \eqref{systemannexe} is $\phi^t$-positively stable  on $C$.
\end{proof}
\begin{proof}[Proof of Proposition \ref{AnnexedernFinstab}]
Since $V_+$ is $\Psi_{x}^t$-positively invariant uniformly in $C$,
\[
\exists \delta>0,\ \forall x\in C,\ \exists t_x\in [0,\delta]:\  z\in V_+\implies \Psi^{t,t_x}_{x}(z)\in V_+,\quad\forall t\geq t_x.
\]
Let be $\eta:=\alpha^{-1}$,  $x\in C$,  $y\in \mathbb{R}^n$, and denote
\begin{align*}
z^+:=\eta\norm{y}F(\phi^{t_x}(x))+y\quad\text{and}\quad z^-:=\eta\norm{y}F(\phi^{t_x}(x))-y.
\end{align*}
On the one hand, $z^{+}:=(z^{+}_1,\ldots,z^{+}_n)\in V_+$ and $z^{-}:=(z^{-}_1,\ldots,z^{-}_n) \in V_+$,
\begin{align*}
\min\limits_{1\le i \le n}\{z^{-}_i,z^{+}_i\}&\geq \eta\norm{y}\min\limits_{1\le i \le n} \big\{\inf_{x\in C}f_i(\phi^{t_x}(x)) \big\}-\norm{y}\geq(\eta\alpha-1)\norm{y}=0.
\end{align*}
On the other hand, $F(\phi^t(x))$ is  solution of the linearized system \eqref{Annexepose} 
\begin{gather*}
\frac{d}{ds}   F(\phi^{s }(x)) = DF(\phi^s(x)) F(\phi^s(x)),  \\
F(\phi^t(x)) = \Psi_x^{t,t_*}F(\phi^{t_*}(x)).
\end{gather*}
Since $V_+$ is $\Psi_{x}^t$-positively invariant uniformly in $C$, we obtain
\begin{align*}
&\eta\norm{y}F(\phi^{t}(x))+\Psi^{t,t_x}_x( y)=\Psi^{t,t_x}_x (z^{+}) \in V_+, \ \forall t \geq t_x, \\
&\eta\norm{y}F(\phi^{t}(x))-\Psi^{t,t_x}_x(y)=\Psi^{t,t_x}_x(z^{-})\in V_+, \ \forall t \geq t_x.
\end{align*}
Put  $r:=\max\{\norm{F(\phi^{t}(x))},\norm{DF(\phi^{t}(x))}\}<+\infty$ we obtain 
\begin{gather*}
\norm{\Psi^{t,t_x}_{x}(y)}\le \eta r\norm{y}, \ \forall t\geq \delta, \quad \norm{\Psi^{t}_{x}} \le \exp(r\delta), \ \forall t\in [0,\delta], \\
\norm{\Psi^{t}_{x}(y)}\le \lambda  \norm{y},\quad\forall x\in C,\ \forall t \geq 0,
\end{gather*}
where $\lambda := \eta r\exp(r\delta)$. The linearized system \eqref{Annexepose} is $\Psi_x^t$-positively stable uniformly in $C$. Lemma  \ref{AnnexeCBstabBsystem}  implies that  \eqref{systemannexe} is $\phi^t$-positively stable  on  $C$.
\end{proof}

We give  in the following proposition a sufficient condition  for the invariance of the cone $V_+$ .

\begin{proposition}\label{LemmeStabilityMeanField}
Let $p,q : \mathbb{R} \to \mathbb{R}$ be continuous $2\pi$-periodic functions, and $g_i,h_{i,j} : [0,+\infty) \to \mathbb{R}$, $1 \leq i,j \leq n$, be continuous functions. Consider the linear non-autonomous ODE
\begin{equation}\label{LinearRefNN}
\frac{dz_i}{ds} = g_i(s) z_i + \frac{1}{n} \sum_{j=1}^n \big( p(s)  +  h_{i,j}(s) \big) z_j, \quad \forall s \geq0, \ \forall 1\leq i\leq n.
\end{equation}
Assume there exists a constant $D>0$ and a continuous $2\pi$-periodic function $\delta : \mathbb{R} \to (0,D)$ such that
\begin{itemize}
\item $\displaystyle \int_{0}^{2\pi}\!\! p(s) \,ds >0$,
\item $\displaystyle \frac{d \delta}{ds} = -p(s) \delta +q(s), \ \forall s \geq0$,
\item $\displaystyle 0 \leq g_i(s) \leq \frac{q(s)}{4D}, \ |h_{i,j}(s)| \leq \frac{q(s)}{8D}, \ \forall s\geq0$.
\end{itemize}
Let  $\Psi^{s,s'}$ be the fundamental matrix of \eqref{LinearRefNN}. Then  there exists $s_*\in[0,2\pi]$ such that  $\Psi^{s,s_*}(V_+)\subset V_+$ for all $s\geq s_*$.
\end{proposition}

\begin{proof}
 Let be $ {s}_*\in [0,2\pi]$ satisfying 
\begin{equation}\label{setoiletheta}
\max_{s\in[0,2\pi]}\delta(s)=\delta(s_*).
\end{equation}
Let  $\Psi^{s,s'}=(\Psi_{1}^{s,s'},\ldots,\Psi_{n}^{s,s'})$ be the fundamental matrix cocycle of  \eqref{LinearRefNN}. Let $\mathring{V}_+$ be the interior of the set ${V}_+$, $z_*\in \mathring{V}_+$ fixed, and $z(s) = \Psi^{s,s_*}(z_*)$.  By continuity,
\[
\exists s_1>s_*:\quad z(s)\in  \mathring{V}_+,\ \forall s\in [s_*,s_1).
\]
Define
\[
S:=\sup\big\{s>s_*: \ z(s')\in  \mathring{V}_+,\quad\forall s'\in [s_*,s)\big\}.
\]
The proposition is proved if we show $S=+\infty$. By contradiction, suppose that $S<+\infty$, then
\begin{equation}\label{provcontradrevised}
z(S) \notin  \mathring{V}_+.
\end{equation}
Define 
\[
\mu(s) := \frac{1}{n} \sum_{i=1}^n z_i(s).
\]
 By  uniqueness of solutions $\mu(s)>0, \ \forall s\in [s_*,S]$.  Then for all $s\in[s_*,S]$,
\begin{gather*}
\frac{dz_i}{ds} = g_i(s) z_i + (p(s)+h_i(s))\mu(s), 
\end{gather*}
where
\[
h_i(s) := \frac{\sum_{j=1}^n h_{i,j}(s) z_j(s)}{\sum_{j=1}^n z_j(s)}.
\]
Define
\[
g(s) := \frac{\sum_{i=1}^n g_i(s)z_i(s)}{\sum_{i=1}^n z_i(s)}, \ \ \text{and} \ \ h(s) := \frac{1}{n}\sum_{i=1}^n h_i(s).
\]
Then
\[
0 \leq  g(s) \leq \frac{q(s)}{4D}, \quad |h_i(s)| \leq  \frac{q(s)}{8D}, \quad |h(s)| \leq \frac{q(s)}{8D}.
\]
Define 
\[
a(s) := g(s) + p(s) + h(s), \quad \forall s \geq s_*.
\]
Then 
\[
\frac{d \mu}{ds} = a(s) \mu, \quad \mu(s) = \mu(s_*) \exp \Big(\int_{s_*}^s a(\zeta) \,d\zeta \Big).
\]
Since $|p(s)+h_i(s)-a(s)| = |-g(s)+h_i(s)-h(s)| \leq q(s)/(2D)$, we have
\begin{align*}
\frac{d z_i}{ds} &\geq (p(s)+h_i(s))\mu(s), \\
&\geq (p(s)+h_i(s)) \mu(s_*) \exp \Big( \int_{s_*}^s a(\zeta) \,d\zeta \Big), \\
\frac{z_i(s) -z_i(s_*)}{\mu(s_*)} &\geq \int_{s_*}^s \big(p(s')+h_i(s')-a(s') \big) \exp\Big( \int_{s_*}^{s'} a(\zeta) \,d\zeta \Big) \,ds' \\
&\quad + \int_{s_*}^s a(s')  \exp\Big( \int_{s_*}^{s'} a(\zeta) \,d\zeta \Big) \,ds' \\
&\geq \exp\Big( \int_{s_*}^{s} a(\zeta) \,d\zeta \Big) - 1 -\int_{s_*}^s \frac{q(s')}{2D} \exp\Big( \int_{s_*}^{s'} a(\zeta) \,d\zeta \Big) \,ds'.
\end{align*}
Multiplying by $\delta(s_*) \exp \big( -\int_{s_*}^s  a(\zeta) \,d\zeta \big) $ and using ${\delta}( {s}_*)<D$, we get
\begin{align*}
\frac{\delta(s_*) z_i(s)}{\mu(s)} &\geq \delta(s_*) -  \delta(s_*) \exp\Big( -\int_{s_*}^{s} a(\zeta) \,d\zeta \Big) \\
&\quad - \int_{s_*}^s \frac{q(s')}{2} \exp\Big( -\int_{s'}^{s} a(\zeta) \,d\zeta \Big) \,ds'.
\end{align*}
Let $\tilde \delta(s)$ be the unique solution of
\[
\frac{d\tilde\delta}{ds} = -a(s) \tilde \delta + \frac{q(s)}{2}, \ \forall s \in[s_*,S], \quad \tilde\delta(s_*) = \delta(s_*).
\]
Then 
\begin{align} 
\tilde \delta(s) =&   \delta(s_*) \exp\Big( -\int_{s_*}^{s} a(\zeta) \,d\zeta \Big)  \notag \\
&\quad  \int_{s_*}^s \frac{q(s')}{2} \exp\Big( -\int_{s'}^{s} a(\zeta) \,d\zeta \Big) \,ds', \notag \\
\frac{\delta(s_*) z_i(s)}{\mu(s)} &\geq \delta(s_*) -  \tilde\delta(s), \quad \forall s\in[s_*,S]. \label{equcontzfinal}
\end{align}
Notice that
\[
a(s) = g(s) + p(s) + h(s) \geq p(s) -\frac{q(s)}{2D}, \quad \forall s \in[s_*,S].
\]
Then
\[
\frac{d\tilde \delta}{ds} \le -p(s) \tilde \delta + \frac{q(s)}{2} \Big( 1 + \frac{\tilde\delta}{D} \Big), \quad \forall s \in[s_*,S].
\]
To obtain $z(S)\in  \mathring{V}_+$ and get a contradiction with  \eqref{provcontradrevised},  it is sufficient to prove that $\tilde\delta(s)\le{\delta}(s), \ \forall s\in(s_*,S]$. For that, we use  the comparison principle of differential equations.  Since $0<\tilde\delta(s_*)={\delta}( {s}_*)<D$ and
\[
\frac{d\tilde\delta}{ds}(s_*)< -p(s_*) \tilde\delta(s_*)+ q(s_*) = \frac{d \delta}{ds}(s_*)
\]
there exists $\epsilon> 0$ such that $\tilde\delta(s )<  \delta(s)$ for all $s\in (s_*,s_*+\epsilon)$. Define
\[
\tilde{S}:=\sup\big\{s \in[s_*,S] : \tilde\delta(s')<  \delta(s'),\quad\forall s'\in (s_*,s] \big\}.
\]
We show that that $\tilde{S} = S$. By contradiction, if $\tilde{S}< S$, then $\tilde\delta(\tilde{S})=\delta(\tilde{S})$, 
\[
\frac{d\tilde\delta}{ds}(\tilde S) < -p(\tilde S) \tilde \delta(\tilde S) +q(\tilde S) = \frac{d \delta}{ds}(\tilde S),
\]
and we could find  $s< \tilde{S}$ close enough to $\tilde{S}$ such that $\tilde \delta(s) >  \delta(s)$. We have obtained a contradiction.  Then $\tilde{S} = S $ and $\tilde\delta(\tilde{S})<\delta(\tilde{S})\le {\delta}(s_*)$. Equation \eqref{equcontzfinal} implies $z(S) \in \mathring{V}_+$, which is a contradiction with \eqref{provcontradrevised}. We have obtained $z(s)\in  \mathring{V}_+ $ for all $ s\geq {s}_*$. By continuity of the fundamental matrix cocycle,  we have   proved that $ z(s) \in V_+$ for all $z(s_*)\in V_+$ and all $s\geq  {s}_*$.
\end{proof}

\section{Proof of the Main result}

We prove in this Section the Main result of  Section \ref{Section:Introduction}. We consider  the Winfree model \eqref{equation:WinfreeModel} and its associated flow  $\Phi^t$.  We recall that  the  Winfree model  satisfies the hypothesis \text{(H)}.  The  {\it linearized Winfree model} is given by
\begin{gather}\label{lin}
\begin{cases}
\displaystyle \frac{dy}{dt}=D\mathcal{W}(\Phi^t(x))y,\quad t\geq0,\quad y=(y_1,\ldots,y_n), \\
\displaystyle \mathcal{W}_i(x):= \omega_i - \kappa\sigma(x)R(x_i), \quad x=(x_1,\ldots,x_n)\in\mathbb{R}^n, \\
\displaystyle \frac{\partial \mathcal{W}_i}{\partial x_j} = -\kappa \Big[ \sigma(x) R'(x_i) \delta_{i,j}  + \frac{R(x_i)P'(x_j)}{n} \Big]. 
\end{cases}
\end{gather} 

We fix $(\gamma,\kappa)\in U$ and an initial condition $x_*\in C_{\gamma,\kappa}^n$ defined in \eqref{invariantwinfreeset}. We denote by $\Psi^{t}_{x_*}$ the fundamental matrix of \eqref{lin}. Let $x(t) = \Phi^t(x_*)$ be the solution of \eqref{equation:WinfreeModel} starting at $x_*$, and
\[
\mu(t) := \frac{1}{n} \sum_{i=1}^n x_i(t), \quad \forall t\geq0.
\]
The main idea of the proof is to rewrite the linearized Winfree model by making a change of time $t \leftrightarrow s$ and a linear change of the tangent vectors $y \leftrightarrow z$. We first notice that the velocity of $\mu$ is strictly positive,
\begin{align*}
\frac{d\mu}{dt} &= \frac{1}{n} \sum_{i=1}^n \omega_i -\kappa \sigma(x) \frac{1}{n} \sum_{i=1}^n R(x_i), \\
&\geq \big( 1 - \kappa \sigma(\mu) R(\mu) \big) - \big( \gamma + \kappa M D \big) \geq 1 - {\kappa}/{\kappa_*} - \gamma - \kappa MD>0.
\end{align*}
The first inequality uses the definition of the constant $M$ in \eqref{equation:aprioriBounds}, the estimates \eqref{equation:ChoiceNaturalFrequencies} on the natural frequencies, the fact that $C_{\gamma,\kappa}^n$ is positively invariant,  that $\Delta_{\gamma,\kappa}$ defined in \eqref{equation:Delta} is bounded from above by $D$, and the simple estimate,
\begin{gather*}
|x_i - \mu| \leq  \Delta_{\gamma,\kappa}(\mu) \leq D,\quad \forall 1 \leq i\leq n, \\
\Big| \sigma(\mu)R(\mu) - \frac{1}{n} \sum_{i=1}^n \sigma(x)R(x_i) \Big| \leq  \frac{M}{n}\sum_{i=1}^n  |x_i - \mu| \leq MD.
\end{gather*}
The second inequality uses the definition of $\kappa_*$ in \eqref{equation:kappaStar} and the third inequality uses the bound from below \eqref{equation:alphaofDelta}. Let be $s_*:=\mu(0)$. The map 
\[
t \in[0,+\infty) \mapsto \mu(t) \in [s_*, +\infty) 
\]
is a smooth diffeomorphism admitting as inverse map
\[
s\in[s_*,+\infty) \mapsto \tau(s) \in [0,+\infty).
\]
Define for $t=\tau(s) \Leftrightarrow s = \mu(t)$,
\begin{align*}
v(s) &:= \frac{d \mu}{dt}(t), \\
f_i(s) &:= \frac{\kappa \sigma(x(t)) R'(x_i(t))}{v(s)}\\
f(s) &:= \max_{1 \leq i \leq n}  f_i(s), \\
z_i(s) &:= y_i(t)\exp\Big( \int_{s_*}^s f(u) \,du \Big), \\
g_i(s) &:= f(s) -f_i(s), \\
p(s) &:= -\frac{\kappa P'(s)R(s)}{1-\kappa P(s)R(s)}, \\
q(s) &:= \frac{(1-\kappa/\kappa_*)\alpha(\gamma,\kappa,D)}{1-\kappa P(s)R(s)}, \\
h_{i,j}(s) &:= -\frac{\kappa  R(x_i(t))P'(x_j(t))}{v(s)} + \frac{\kappa  P'(s)R(s)}{1-\kappa P(s)R(s)}.
\end{align*}

\begin{lemma}\label{LastLemma}
Then
\begin{enumerate}
\item \label{item:LastLemma_1} $\displaystyle \frac{dz_i}{ds} = g_i(s)z_i + \frac{1}{n}\sum_{j=1}^n \big( p(s) + h_{i,j}(s) \big) z_j, \quad \forall s \geq 0, \ \forall 1 \leq i\leq n$,
\item \label{item:LastLemma_2}  $\displaystyle \int_0^{2\pi}p(s) \,ds>0$, 
\item \label{item:LastLemma_3}  $p,q:\mathbb{R} \to \mathbb{R}$ are continuous and $2\pi$-periodic,
\item \label{item:LastLemma_4}  $\displaystyle 0 \leq g_i(s) \leq \frac{q(s)}{4D}, \ |h_{i,j}(s)|  \leq \frac{q(s)}{8D}$.
\end{enumerate}

\end{lemma}

\begin{proof}  Using the change of variable $\tilde z_i(s) = y_i \circ \tau(s)$, $\tilde x_i(s) = x_i \circ \tau(s)$, equation \eqref{lin} becomes, $v(s) := \frac{d}{dt} \mu \circ \tau(s)$,
\begin{align*}
\frac{d\tilde z_i}{ds}(s) &= \frac{1}{v(s)} \frac{dy_i}{dt}(t) = - \frac{\kappa}{v(s)} \Big( \sigma(\tilde x)R'(\tilde x_i) \tilde z_i + \frac{1}{n}\sum_{j=1}^n R(\tilde x_i)P'(\tilde x_j)\tilde z_j \Big), \\
&= -f_i(s)\tilde z_i + \frac{1}{n}\sum_{j=1}^n \big( p(s) +h_{i,j}(s) \big)\tilde{z}_j
\end{align*}
Making the scaling $z_i(s) := \tilde z_i(s) \exp( \int_{s_*}^s f(u) \,du)$, one obtains item \ref{item:LastLemma_1}. Item \ref{item:LastLemma_2} is a consequence of hypothesis (H) and
\begin{gather*}
\frac{d}{ds} \log \Big( \frac{1}{1-\kappa P(s)R(s)} \Big) = p(s) - \frac{\kappa P(s) R'(s)}{1-\kappa P(s)R(s)}, \\
\int_0^{2\pi}\!\! p(s) \,ds = \int_0^{2\pi} \frac{\kappa P(s) R'(s)}{1-\kappa P(s)R(s)} \,ds >0.
\end{gather*}
Item \ref{item:LastLemma_3} is true by definition of $p$ and $q$. Using $|\tilde x_i(s)-s| \leq D$, the estimate on $h_{i,j}$ is given by
\begin{align*}
|h_{i,j}(s)| &\leq \frac{\kappa|R(s)P'(s)-R(\tilde x_i)P'(\tilde x_i)|}{1-\kappa P(s)R(s)} \\ 
& \hspace{3cm}+ \frac{\kappa |R(\tilde x_i)| \,|P'(\tilde x_i)|\, | v(s)-(1-\kappa P(s)R(s))|}{v(s)(1-\kappa P(s)R(s))} \\
&\leq \frac{\kappa M}{8(1-\kappa P(s)R(s))} \Big[D+ \frac{\gamma+\kappa M D}{1-\kappa/\kappa_*-\gamma-\kappa MD} \Big] \\
&\leq \frac{\alpha(\gamma,\kappa,D)}{8D} \frac{1-\kappa/\kappa_*}{1-\kappa P(s)R(s)} = \frac{q(s)}{8D}.
\end{align*}
The estimate on $g_i$ is given by
\begin{align*}
g_i(s) &\leq \max_{1\leq i,j\leq n} \frac{\kappa |\sigma(\tilde x)|\,|R'(\tilde x_i) -R'(\tilde x_j)|}{v(s)} \\
&\leq \frac{\kappa MD}{4 (1-\kappa P(s)R(s))} +\frac{\kappa MD(1-\kappa P(s)R(s)-v(s))}{4v(s)(1-\kappa P(s)R(s))} \\
&\leq \frac{\kappa MD}{4(1-\kappa P(s)R(s))}\Big[ 1 + \frac{\gamma+\kappa MD}{1-\kappa/\kappa_*-\gamma -\kappa MD} \Big] \leq \frac{q(s)}{4D}. \qedhere
\end{align*}
\end{proof}

We now conclude the proof of the main results: we will show that $V_+$ is $\Psi^t_x$-positively invariant uniformly in $C_{\gamma,\kappa}^n$; proposition \ref{AnnexedernFinstab} will imply that the Winfree model is $\Phi^t$-positively stable uniformly on $C_{\gamma,\kappa}^n$. 

The fact that $V_+$ is positively invariant is a direct consequence of proposition \ref{LemmeStabilityMeanField} applied to the linearized Winfree model  written in terms of the new variables $z(s) =(z_1(s),\ldots,z_n(s))$. Part of the hypotheses of proposition \ref{LemmeStabilityMeanField} have been proved in lemma \ref{LastLemma}. We prove in the following lemma the remaining hypothesis.

\begin{lemma}\label{LastLemma2}
There exists a continuous $2\pi$-periodic function $\delta : \mathbb{R} \to (0,D)$ such that
\[
\frac{d\delta}{ds} = -p(s) \delta +q(s), \quad \forall s\geq0.
\] 
\end{lemma}

\begin{proof}
Let ${\Delta}_{\gamma,\kappa}(s)$ be the positive $2\pi-$periodic function as in  \eqref{equation:Delta}. Define
\[
{\delta}(s):= \frac{1-\kappa/\kappa_*}{1-\kappa P(  s)R( s)} {{\Delta}_{\gamma,\kappa}( s)}.
\]
Then $\delta \leq  \Delta_{\gamma,\kappa} < D$, and
\begin{align*}
\frac{d\delta}{ds}&= \frac{(1-\kappa/\kappa_*) \kappa(P'(s)R(s)+P(s)R'(s))}{(1-\kappa P(s)R(s))^2}\Delta_{\gamma,\kappa} + \frac{1-\kappa/\kappa_*}{1-\kappa P(s)R(s)} \frac{d \Delta_{\gamma,\kappa}}{ds}, \\
&= \frac{\kappa P'(s)R(s)}{1-\kappa P(s)R(s)}\delta + \frac{(1-\kappa/\kappa_*)\alpha(\gamma,\kappa,D)}{1-\kappa P(s)R(s)} = -p(s)\delta +q(s). \qedhere
\end{align*} 
\end{proof}

\section{Conclusion}
We  studied the stability of the Winfree model in its synchronized state. The proof is based on the positive invariant cone method. The main synchronization hypothesis used in \cite{OukilKessiThieullen} is again a critical hypothesis for the linear stability.

\section*{Acknowledgements}

We would like to thank the referee for his/her precise remarks and corrections that helped us to improve the final text.



\end{document}